\documentclass[11pt,leqno]{amsart}
\usepackage{amsfonts, amsmath, amsthm, amssymb,xspace}
\usepackage[breaklinks=true]{hyperref}
\usepackage{amsmath,amscd}

\theoremstyle{plain}

\newtheorem{theorem}{Theorem}[section]
\newtheorem{lemma}{Lemma}[section]

\theoremstyle{definition}

\newtheorem{remark}{Remark}[section]

\DeclareMathAlphabet{\mathscr}{OT1}{pzc}{m}{it}

\DeclareMathOperator{\spec}{\ensuremath{Spec}}
\DeclareMathOperator{\Gr}{\ensuremath{gr}}

\DeclareMathOperator{\Sym}{\ensuremath{Sym}}

\begin{document}

\title{The inverse Galois problem for Cherednik algebras}
\author{Akaki Tikaradze}
\address{The University of Toledo, Department of Mathematics, Toledo, Ohio, USA}
\email{\tt tikar06@gmail.com}

\begin{abstract} 

Given the spherical subalgebra  $B$ of a rational Cherednik algebra, we aim to classify all finite groups  $\Gamma$
for which there exists a domain $R$ on which $\Gamma$ acts by ring automorphisms, such that $B=R^{\Gamma}.$
We describe such groups in terms of geometry of the center of the reduction of $B$ modulo a large prime.

\end{abstract}
\maketitle

\section{Introduction and main results}

Given a simple domain $B$ over $\mathbb{C}$, it is an interesting and natural problem to classify
finite groups $\Gamma$ for which there exists a domain $R$ on which $\Gamma$ acts via ring automorphisms
such that $B=R^{\Gamma}.$  Given the direct analogy with Galois theory, 
we refer to this question as the inverse Galois problem for $B.$
In \cite{T} we solved this problem
for rings of differential operators on smooth affine varieties. Namely,
 if $D(X)=R^{\Gamma},$
where $X$ is a smooth affine variety over $\mathbb{C}$ and $\Gamma$ is a finite group of $\mathbb{C}$-automorphisms of a domain $R,$
then there exists a smooth affine variety $Y$ such that $R\cong D(Y)$ and $Y\to X$ is a $\Gamma$-Galois etale covering of $X$
[\cite{T}, Theorem 1 ].
It was also shown in [\cite{T}, Theorem 2] that a very generic central quotient of the  enveloping algebra of a  semi-simple Lie algebra
cannot be a nontrivial fixed ring. In this paper we apply the methodology of \cite{T} to the case when $B$ is a (simple)
 spherical subalgebra of a rational Cherednik algebra defined by Etingof and Ginzburg \cite{EG}. Let us recall their definition.

 Let $W$ be a complex reflection group; $\mathfrak{h}$ its reflection representation
and $S\subset W$  the set of all complex reflections.
Let $(\,,\,):\mathfrak{h}\times \mathfrak{h}^*\to \mathbb{C}$ be the natural pairing. Given a reflection $s\in S,$
let $\alpha_s\in \mathfrak{h}^*$ be an eigenvector of $s$ for eigenvalue $1$.
Also, let $\alpha_s^{\vee} \in \mathfrak{h}$ be an eigenvector normalized so that $\alpha_s(\alpha_s^{\vee} )=2.$
Let $c:S\to \mathbb{C}$ be a function invariant with respect to conjugation by $W.$ 
The rational Cherednik algebra $H_{c}$ associated to $(W, \mathfrak{h})$ with parameter
$c$ is defined as the quotient of $\mathbb{C}[W]\ltimes T( \mathfrak{h}\oplus \mathfrak{h^*})$ by the following relations
$$
[x, y]=(y,x)-\sum_{s\in S}c(s)(y,\alpha_s)(\alpha_s^{\vee} , x),\quad [x, x']=0=[y,y']
$$
for all $x, x'\in \mathfrak{h}^*$ and $y, y'\in \mathfrak{h}.$

In this note we are concerned with the spherical subalgebra $B_c$ of a Cherednik algebra $H_c.$
Recall that  $$B_c=eH_ce, e=\frac{1}{|W|}\sum_{g\in W}g.$$
For $c=0$, we have that $B_0=D(\mathfrak{h})^W.$ Algebras $B_c$ can be viewed as filtered quantizations
of the ring of functions on $(\mathfrak{h}\oplus\mathfrak{h}^*)/W.$

 Since $B_c$ is defined over $S=\mathbb{Z}[\frac{1}{l}][c]$, we may define its base change $(B_c)_{\bf{k}}=B_c\otimes_S\bold{k}$
 which we denote by $B_{\bar{c}},$ where $\bar{c}$ is the image of $c$ under the base change map
 $S\to \bf{k}.$

The following theorem is the main result of this paper. It relates the inverse Galois problem for $B_c$
to geometry of the center of reduction of $B_c$ modulo a large prime. 
\begin{theorem}\label{main}

Let $B_c$ be simple. If $B_c=R^{\Gamma}$ for a domain $R$ and a finite group $\Gamma$, then 
there exists a finitely generated ring $S\subset\mathbb{C} $ containing values of $c$ such that the following holds.
For any base change $S\to\bold{k}$ to an algebraically closed field of  positive characteristic
the group $\Gamma$
is a quotient of the etale fundamental group of the smooth locus of
$\spec Z(B_{\bar{c}}).$ 

\end{theorem}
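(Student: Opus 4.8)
\medskip
\noindent The plan is to adapt the method of \cite{T}: spread the hypothesis out over a finitely generated base ring, reduce modulo a large prime $p$, and use the fact that in characteristic $p$ the algebra $B_{\bar c}$ is module finite over its centre in order to extract from $R$ a connected Galois \'etale $\Gamma$-cover of the smooth locus of $\spec Z(B_{\bar c})$. Producing such a cover is precisely what it means for $\Gamma$ to be a quotient of the \'etale fundamental group of that smooth locus.

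\textbf{Step 1 (spreading out and reduction).} The ring structure on $R$, the action of $\Gamma$, and the isomorphism $B_c\cong R^{\Gamma}$ involve only finitely many complex numbers, so all of them are defined over some finitely generated subring $S\subset\mathbb{C}$ containing the values of $c$. Enlarging $S$ by inverting $|\Gamma|$ and finitely many further integers and elements of $S$, we may assume that $R$ descends to a flat $S$-algebra $R_S$ which is a domain and stays a domain after base change to any field, that $\Gamma$ acts on $R_S$ with $R_S^{\Gamma}=B_{c,S}$, and that $R_S$ is a finite module over $B_{c,S}$ on each side. Then for every base change $S\to\mathbf{k}$ to an algebraically closed field of characteristic $p>0$ one gets $B_{\bar c}=R_{\mathbf{k}}^{\Gamma}$ with $R_{\mathbf{k}}$ a domain, finite over $B_{\bar c}$, and with $p\nmid|\Gamma|$.

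\textbf{Step 2 (passage to the centres).} In characteristic $p$ the spherical Cherednik algebra $B_{\bar c}$ is a finite module over its centre $Z(B_{\bar c})$; hence $R_{\mathbf{k}}$ is a finite module over $Z(B_{\bar c})$ and therefore a prime affine PI algebra, module finite over its own centre $Z(R_{\mathbf{k}})$. Because $p\nmid|\Gamma|$ and the action of $\Gamma$ on $R_{\mathbf{k}}$ is (generically) outer --- a point I would derive from the simplicity of $B_c$ in characteristic zero --- the ring $R_{\mathbf{k}}$ is a $\Gamma$-Galois extension of $B_{\bar c}$. Over the dense open Azumaya locus $U$ of $B_{\bar c}$, a $\Gamma$-Galois extension of an Azumaya algebra is again Azumaya of the same PI degree and is obtained by base change along a connected finite \'etale $\Gamma$-Galois cover $\widetilde U\to U$; in particular over $U$ the morphism $\spec Z(R_{\mathbf{k}})\to\spec Z(B_{\bar c})$ is exactly $\widetilde U\to U$. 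Connectedness of $\widetilde U$ forces $\Gamma$ to act faithfully on $Z(R_{\mathbf{k}})$, and a short argument using torsion-freeness of $R_{\mathbf{k}}$ over $Z(B_{\bar c})$ (a central element of $B_{\bar c}$ is central in $R_{\mathbf{k}}$ because it is so generically) gives $Z(R_{\mathbf{k}})^{\Gamma}=Z(B_{\bar c})$ globally. Faithfulness on the centre can alternatively be read off the filtered structure: an element of $\Gamma$ fixing $Z(R_{\mathbf{k}})$ fixes all $p$-th powers in the reduced commutative ring $\Gr R_{\mathbf{k}}$, hence acts trivially on $\Gr R_{\mathbf{k}}$ and so on $R_{\mathbf{k}}$.

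\textbf{Step 3 (from the Azumaya locus to the smooth locus).} Put $X=\spec Z(B_{\bar c})$, let $Y=\spec$ of the integral closure of $Z(R_{\mathbf{k}})$, and let $\pi\colon Y\to X$ be the induced finite map, which by Step 2 realises $X$ as $Y/\Gamma$ and is a $\Gamma$-Galois \'etale cover over the Azumaya locus. I would now invoke that $B_{\bar c}$ is Azumaya over the entire smooth locus $X^{\mathrm{sm}}$ of $X$ --- the positive characteristic counterpart of the Etingof--Ginzburg description of the Azumaya locus of a symplectic reflection algebra as the smooth locus of its centre --- so that $R_{\mathbf{k}}$ is Azumaya over $X^{\mathrm{sm}}$ and $\pi^{-1}(X^{\mathrm{sm}})\to X^{\mathrm{sm}}$ is a $\Gamma$-torsor, in particular finite \'etale. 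If one only knows Azumayaness away from codimension $\ge 2$ in $X^{\mathrm{sm}}$, then Zariski--Nagata purity of the branch locus, applied to the finite map $\pi$ from the normal $Y$ to the regular $X^{\mathrm{sm}}$ which is unramified in codimension one, yields the same conclusion. Finally $X$ is irreducible since $Z(B_{\bar c})$ is a domain, so $X^{\mathrm{sm}}$ is connected; $Y$ is irreducible, so $\pi^{-1}(X^{\mathrm{sm}})$ is connected; hence $\pi^{-1}(X^{\mathrm{sm}})\to X^{\mathrm{sm}}$ is a connected $\Gamma$-Galois \'etale cover, which exhibits $\Gamma$ as a quotient of $\pi_1^{\mathrm{et}}(X^{\mathrm{sm}})$.

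\textbf{Main obstacle.} The spreading-out and the PI generalities are routine; the real content lies in Step 3's geometric input --- that $B_{\bar c}$ is Azumaya over (a codimension-one-dense subset of) the smooth locus of $\spec Z(B_{\bar c})$ for large $p$ --- together with the descent of the Galois extension and the faithfulness of the $\Gamma$-action at the level of centres in Step 2. This is exactly the place where the geometry of the Calogero--Moser-type variety $\spec Z(B_{\bar c})$ must be used, and is the step I expect to demand the most care.
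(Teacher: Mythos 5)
Your overall architecture (spread out, reduce mod $p$, use finiteness over the centre and the coincidence of the Azumaya and smooth loci to produce a $\Gamma$-Galois \'etale cover of the smooth locus) matches the paper's. But Step 1 contains the decisive gap: you assert that, after enlarging $S$, the model $R_S$ ``is a domain and stays a domain after base change to any field.'' For a \emph{noncommutative} algebra there is no spreading-out principle guaranteeing this; geometric integrality of fibres is a constructibility statement for commutative finitely generated algebras, and primeness of reductions of a noncommutative domain is exactly the kind of property that can fail mod $p$. Without $R_{\mathbf{k}}$ (equivalently, the preimage $Y$ of the smooth locus $U$ in $\spec Z(R_{\mathbf{k}})$) being irreducible, your cover $\widetilde U\to U$ may be disconnected, and a disconnected $\Gamma$-Galois cover only exhibits a \emph{subgroup} of $\Gamma$ (the stabilizer of a component) as a quotient of $\pi_1^{\mathrm{et}}(U)$ --- so the conclusion of the theorem does not follow. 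Your ``Main obstacle'' paragraph locates the difficulty in Step 3, but the Azumaya--smooth coincidence is simply quoted from Brown--Changtong in the paper; the genuinely hard point is precisely the one you assumed away.

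The paper closes this gap with an argument specific to Cherednik algebras. First it shows that $\mathrm{ad}(\delta)$ ($\delta$ the discriminant) acts locally nilpotently on $R$: one views $R$ as a module over $B\otimes B^{\mathrm{op}}$, localizes at $f=\delta\otimes 1-1\otimes\delta$, and uses generic flatness together with the vanishing of $R_{\mathbf{k}}[f^{-1}]$ for $p\gg 0$ to force $R[f^{-1}]=0$. Then $R'=R[\delta^{-1}]$ satisfies $R'^{\Gamma}=B[\delta^{-1}]\cong D(\mathfrak{h}^{\mathrm{reg}})$ via the Dunkl embedding, so the main theorem of \cite{T} applies and gives $R'\cong D(Y)$ for a smooth affine variety $Y$; reductions of such rings modulo large $p$ are known to be domains, and $R_{\mathbf{k}}$ embeds into $R'_{\mathbf{k}}$ by torsion-freeness over the centre. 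Some of your secondary steps are also shakier than the paper's corresponding moves (the paper works with the Morita equivalence between $B$ and $\mathbb{C}[\Gamma]\ltimes R$ coming from Montgomery's theory rather than an ``outer action'' argument, and your claim that triviality on $\Gr R_{\mathbf{k}}$ forces triviality on $R_{\mathbf{k}}$ is false for general filtered automorphisms), but the domain issue is the essential missing idea.
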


 We apply Theorem \ref{main} to classes of Cherednik algebras for which the center of their reduction modulo
$p>0$ is well-known and (relatively) easy to describe. Namely, we consider two families of  spherical subalgebra
of the rational Cherednik algebras: one  associated to the pair $(S_n, \mathbb{C}^n)$ and a parameter $c\in\mathbb{C},$
the other family of algebras being noncommutative deformations of Kleinian singularities.

\begin{theorem}\label{Sn}

Let $B_c$ be the spherical subalgebra of a rational Cherednik algebra associated with $(S_n \mathbb{C}^n)$ with a parameter $c\in\mathbb{C}.$
Assume that $B_c$ is simple.
If $c$ is irrational then $B_c$ cannot be a fixed ring of a domain under a nontrivial  action of a finite
group of ring automorphisms. For  rational $c$, if $B_c=R^\Gamma$ with finite group $\Gamma$ and domain $R,$
then $\Gamma$ must be a quotient of $S_n.$

\end{theorem}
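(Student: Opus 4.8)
The plan is to combine Theorem~\ref{main} with the known description --- due to Etingof--Ginzburg, Bezrukavnikov--Finkelberg--Ginzburg and Bellamy--Martino --- of the centre of the type $A$ spherical Cherednik algebra in positive characteristic.

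Assume $B_c=R^{\Gamma}$ with $R$ a domain and $\Gamma$ a nontrivial finite group. For irrational $c$ the algebra $B_c$ is automatically simple, and for rational $c$ simplicity is part of the hypothesis, so Theorem~\ref{main} applies and produces a finitely generated ring $S\subset\mathbb{C}$ with $c\in S$ such that for every base change $S\to\mathbf{k}$ to an algebraically closed field of characteristic $p$ the group $\Gamma$ is a quotient of $\pi_1^{\mathrm{et}}$ of the smooth locus $U_{\bar c}\subseteq\spec Z(B_{\bar c})$. First I would enlarge $S$ so that every prime it meets exceeds $|\Gamma|$; then any connected $\Gamma$-cover of $U_{\bar c}$ has degree prime to $p$, hence is tamely ramified, and --- after spreading $U_{\bar c}$, a smooth compactification, and the cover out over a localization of $\mathbb{Z}$ --- the tame specialization comparison theorem identifies the admissible $\Gamma$ with the quotients of the maximal prime-to-$p$ quotient of the topological fundamental group of the complex fibre of the corresponding family. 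Thus the problem is reduced to a classical computation of fundamental groups of the smooth loci of certain complex Poisson varieties deforming $(\mathfrak{h}\oplus\mathfrak{h}^{*})/S_n$.

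Next I would feed in the structure theory. For $W=S_n$ the algebra $B_{\bar c}$ is a finite module over its $p$-centre $\mathbf{k}[\mathfrak{h}^{(1)}\oplus\mathfrak{h}^{*(1)}]^{S_n}$, and $\spec Z(B_{\bar c})$ is a finite, generically \'etale cover of $(\mathfrak{h}\oplus\mathfrak{h}^{*})^{(1)}/S_n$; outside a proper ``resonant'' subset of the parameter line, which is contained in the prime field $\mathbb{F}_p$, the algebra $B_{\bar c}$ is Azumaya over its centre and $\spec Z(B_{\bar c})$ is the characteristic-$p$ Calogero--Moser space $\mathrm{CM}_n$, whereas at a resonant parameter $\spec Z(B_{\bar c})$ acquires only symplectic singularities whose transverse slices are products of symmetric-group quotients $\mathrm{Sym}^{k}(\mathbb{A}^2)$ with $k\le n$. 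For \emph{irrational} $c$ one arranges $\bar c$ to be non-resonant: if $c$ is transcendental, base change $S\to\overline{\mathbb{F}_p(t)}$ with $c\mapsto t$; if $c$ is algebraic irrational, choose (via Chebotarev) a prime for which the minimal polynomial of $c$ remains irreducible, so that $\bar c\notin\mathbb{F}_p$. Then $U_{\bar c}=\spec Z(B_{\bar c})=\mathrm{CM}_n$, which reduces from the complex Calogero--Moser space; the latter is simply connected (Wilson, Etingof--Ginzburg), so its prime-to-$p$ fundamental group vanishes and $\Gamma=1$, contradicting nontriviality. For \emph{rational} $c$ the reduction $\bar c$ always lies in $\mathbb{F}_p$; if it is non-resonant the preceding case again forces $\Gamma=1$, and if it is resonant then the complex model of $U_{\bar c}$ is the complement of symplectic strata with $\mathrm{Sym}^{k}(\mathbb{A}^2)$-slices, and since $\mathbb{A}^2(\mathbb{C})=\mathbb{R}^4$ while ordered configuration spaces of $\mathbb{R}^d$ are simply connected for $d\ge3$, its fundamental group is a quotient of some $S_k$ with $k\le n$, hence a quotient of $S_n$; the extreme case $c=0$, with $B_0=D(\mathfrak{h})^{S_n}$ and $\spec Z(B_{\bar 0})$ a Frobenius twist of $(\mathfrak{h}\oplus\mathfrak{h}^{*})/S_n$, realizes $\Gamma=S_n$ via $R=D(\mathfrak{h})$.

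I expect the main obstacle to be the positive-characteristic geometry of $\spec Z(B_{\bar c})$: showing that the resonant parameters for $S_n$ lie in the prime field (so that irrationality of $c$ can be exploited, after base change, to avoid them) and that at every parameter $\spec Z(B_{\bar c})$ carries only symplectic singularities of symmetric-group type; and, alongside this, the bookkeeping needed to spread $U_{\bar c}$ together with a compactification out over $\mathbb{Z}[1/N]$ so that the tame specialization theorem genuinely reduces the \'etale fundamental group in characteristic $p$ to the topological fundamental group of a configuration-type space. Once these inputs are in hand, the finiteness of $\Gamma$ (which permits passage to the tame quotient) together with the simple-connectedness of $\mathrm{CM}_n$ and of configuration spaces of $\mathbb{R}^4$ completes the argument.
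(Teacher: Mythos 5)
Your proposal follows essentially the same route as the paper: apply Theorem~\ref{main}, use Lemma~\ref{cheb} (Chebotarev) to arrange $\bar c\notin\mathbb{F}_p$ when $c$ is irrational, invoke the Bezrukavnikov--Finkelberg--Ginzburg identification of $\spec Z(B_{\bar c})$ with $(\mathrm{CM}_n)_{\mathbf k}$ (resp.\ with $(\mathfrak{h}\oplus\mathfrak{h}^*)/S_n$ when $\bar c\in\mathbb{F}_p$), and then lift to characteristic $0$ to compute the prime-to-$p$ fundamental group via simple connectedness of $\mathrm{CM}_n$ and $\pi_1$ of the configuration space of $\mathbb{R}^4$ modulo $S_n$. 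The only divergence is your extra ``resonant versus non-resonant'' case split for rational $c$, which is unnecessary: for $\bar c\in\mathbb{F}_p$ the BFG description already gives $\spec Z(B_{\bar c})=(\mathfrak{h}\oplus\mathfrak{h}^*)/S_n$ globally, which is exactly the computation you perform in your ``extreme case.''
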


Next, we consider the case of noncommutative deformations
of the Kleinian singularities of type A \cite{H} (the spherical subalgebras of Cherednik algebras
associated with the pair a cyclic group and its one dimensional representation).These family of algebras is also known as generalized Weyl algebras. 
Let us recall their definition.

Let $v=\prod_{i=1}^n (h-t_i)\in \mathbb{C}[h].$
Then the algebra $A(v)$ is generated by $x, y, h$ subject to the relations
$$xy=v(h),\quad yx=v(h-1),\quad hx=x(h+1),\quad hy=y(h-1).$$
Recall that if $v=\prod_{i}(h+\frac{i}{n}),$ then $A(v)$ can be identified with the fixed ring
of the Weyl algebra $W_1(\mathbb{C})$ under the natural action of the cyclic group of order $n.$
On the other hand, when $n=2$ algebras $A(v)$ correspond to central quotients of $U(\mathfrak{sl}_2).$
It was shown in \cite{S} that a countable family of primitive quotients
of $U(\mathfrak{sl}_2)$ can be realized as $\mathbb{Z}/2\mathbb{Z}$-fixed rings
of algebras of differential operators on certain (singular) algebraic curves.
\begin{theorem}\label{typeA}
Let $A(v)$ be simple. If $A(v)=R^{\Gamma}$ with $R$ domain and $\Gamma$ a finite group,
then $\Gamma$ must be a quotient of $\mathbb{Z}/n\mathbb{Z}.$ If in addition $t_i-t_j\notin\mathbb{Q}$ for some $i, j,$
then $|\Gamma|<n.$

\end{theorem}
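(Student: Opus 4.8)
The plan is to derive Theorem~\ref{typeA} from the general Theorem~\ref{main} by making the geometry of $\spec Z(A(v)_{\bar c})$ completely explicit. First I would recall the structure of the generalized Weyl algebra $A(v)$ in characteristic $p$: for a large prime $p$, the center $Z(A(v)_{\bar{\mathbf{k}}})$ contains the $p$-th powers $x^p, y^p, h^p$, and in fact $Z(A(v)_{\bar{\mathbf{k}}})$ is generated by these together with the relation coming from $x^py^p = \prod_{j=0}^{p-1} v(h-j)$ evaluated appropriately — more precisely one gets an affine surface of the form $XY = F(Z)$ where $X = x^p$, $Y = y^p$, $Z$ is a suitable central element built from $h$ (a $p$-th power twisted by a polynomial in $h$ that becomes central), and $F$ is a degree-$np$ polynomial whose roots are determined by the reductions of the $t_i$ modulo $p$. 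The key point is that the singular locus of such a surface $XY = F(Z)$ consists exactly of the points $X=Y=0$, $Z = \zeta$ where $\zeta$ is a \emph{multiple} root of $F$; at a root of multiplicity $m$ this is an $A_{m-1}$-type surface singularity whose local étale fundamental group is $\mathbb{Z}/m\mathbb{Z}$.

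Next I would compute $\pi_1^{\text{et}}$ of the smooth locus of this surface globally. Removing the finitely many singular points from $XY=F(Z)$, the resulting smooth surface has abelian étale fundamental group, and in fact $\pi_1^{\text{et}}$ of the smooth locus of $XY = \prod (Z-\zeta_i)^{m_i}$ is the abelian group $(\bigoplus_i \mathbb{Z}/m_i\mathbb{Z})\big/ (\text{sum}=0)$ — equivalently $\mathbb{Z}^{r}/(\text{relations } m_i e_i)$ modulo the diagonal-type relation forcing the product of local monodromies around all the punctures plus the one at infinity to be trivial. The upshot is that this group is a quotient of $\bigoplus_i \mathbb{Z}/m_i\mathbb{Z}$, hence cyclic-by-abelian of a controlled form, and its order divides $\prod m_i$. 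Since $\sum m_i = \deg F = np$ and each factor $v(h-j)$ contributes $n$ roots, after choosing $p$ large and $S$ large enough that the $t_i$ have well-defined distinct-or-equal reduction pattern, the multiplicities $m_i$ are exactly governed by how the shifted values $t_i - t_j$ differ by integers: generically (when no $t_i - t_j \in \mathbb{Z}$) all $np$ roots are simple away from a cluster structure that still forces every $m_i \le n$, and the fundamental group is a quotient of $\mathbb{Z}/n\mathbb{Z}$; when moreover some $t_i - t_j \notin \mathbb{Q}$, one can arrange (by Chebotarev / choosing $p$ appropriately in a positive-density set of primes) that the reduction does \emph{not} create a multiplicity-$n$ root, forcing all $m_i < n$ and hence $|\Gamma| < n$.

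Concretely the steps are: (1) identify $Z(A(v)_{\bar{\mathbf{k}}})$ as the coordinate ring of a surface $XY = F(Z)$ with $F$ of degree $np$, citing the known description for generalized Weyl algebras in positive characteristic; (2) determine $\mathrm{Sing}$ and the multiplicity profile of $F$ in terms of the mod-$p$ reductions of the $t_i$, noting $\max_i m_i \le n$ always; (3) compute $\pi_1^{\text{et}}$ of the smooth locus as a quotient of $\bigoplus_i \mathbb{Z}/m_i\mathbb{Z}$, hence a quotient of $\mathbb{Z}/n\mathbb{Z}$ in the worst case; (4) invoke Theorem~\ref{main} to conclude $\Gamma$ is a quotient of this group, hence a quotient of $\mathbb{Z}/n\mathbb{Z}$; (5) for the irrationality addendum, use that an irrational difference $t_i - t_j$ prevents, for infinitely many $p$, the occurrence of a full-multiplicity-$n$ root, so for a suitable base change $|\Gamma| \le \max_i m_i < n$.

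The main obstacle I anticipate is step~(3): pinning down the precise étale fundamental group of the smooth locus of the (possibly quite degenerate, reducible-at-infinity) affine surface $XY = F(Z)$ in characteristic $p$, rather than just the tame part. One must be careful that $\pi_1^{\text{et}}$ here really is the expected finite abelian group and is not enlarged by wild ramification — this is where choosing $p$ large (so that $p \nmid m_i$ for all the relevant multiplicities, which holds once $p > n$) is essential, and where I would lean on the known local structure of $A_{m-1}$ singularities together with a Zariski–van Kampen type argument for the global surface. A secondary subtlety is the clustering analysis in step~(2): translating "$t_i - t_j \notin \mathbb{Q}$" into a genuine statement about mod-$p$ root multiplicities for a Zariski-dense set of primes, which requires the finitely generated ring $S$ from Theorem~\ref{main} to be chosen compatibly with a Chebotarev-type argument.
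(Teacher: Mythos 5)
Your overall strategy is the paper's: make the center of $A(v)$ mod $p$ explicit as a surface $XY=F(Z)$, locate the $A_{m-1}$ singularities, lift the smooth locus to characteristic $0$ via Witt vectors to control the prime-to-$p$ fundamental group, and feed the answer into Theorem~\ref{main} (with Lemma~\ref{cheb} governing which reduction patterns of the $t_i$ occur for infinitely many $p$). But your step (3) contains a genuine error, and it is exactly the step the whole theorem rests on. The fundamental group of the smooth locus of $XY=\prod_i(Z-\zeta_i)^{m_i}$ is \emph{not} $\bigl(\bigoplus_i \mathbb{Z}/m_i\mathbb{Z}\bigr)/(\text{sum}=0)$; it is cyclic of order $\gcd(m_1,\dots,m_k)$. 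Your formula already fails for multiplicities $(2,2,2)$, where it gives $(\mathbb{Z}/2\mathbb{Z})^2$ --- a non-cyclic group that is not a quotient of $\mathbb{Z}/n\mathbb{Z}$ --- so your assertion that the group is ``a quotient of $\mathbb{Z}/n\mathbb{Z}$ in the worst case'' does not follow from your own description, and the bound $|\Gamma|\le\max_i m_i$ in step (5) does not follow either. The correct computation is short: the surface minus $\{X=0\}$ is isomorphic to $\mathbb{G}_m\times\mathbb{A}^1$ (coordinates $X,Z$), so its fundamental group is $\mathbb{Z}$ generated by a loop $\gamma$ around $X=0$; the divisor of $X$ is $\sum_i m_i L_i$ where $L_i=\{X=0,\,Z=\zeta_i\}$, so the meridian of $L_i$ equals $\gamma^{m_i}$, and killing all meridians leaves $\mathbb{Z}/\gcd(m_i)\mathbb{Z}$. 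This is what rescues the argument, because $\gcd(m_i)$ divides $\sum_i m_i=n$.

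That last identity exposes a second, related slip: your bookkeeping has $\deg F=np$ with $\sum m_i=np$, because you are working with the roots of $\prod_{j=0}^{p-1}v(h-j)$ as a polynomial in $h$. The singular points live on $\spec$ of the center, whose coordinate is the Artin--Schreier element $h_p=h^p-h$, and in that variable the relation is $x_py_p=\prod_{i=1}^n\bigl(h_p-(\bar t_i^{\,p}-\bar t_i)\bigr)$ of degree exactly $n$, with multiplicities $a_i$ counting how many $\bar t_j$ fall in a given coset of $F_p$; hence $\sum a_i=n$ and $\gcd(a_i)\mid n$, which is precisely what you need. With these two corrections your outline closes up and matches the paper, which in fact takes the simplest admissible primes: for the first claim it chooses $p$ (via Lemma~\ref{cheb}) with all $\bar t_i\in F_p$, giving a single multiplicity-$n$ point and $\pi_1=\mathbb{Z}/n\mathbb{Z}$; for the second it chooses $p$ with $\bar t_1-\bar t_2\notin F_p$, forcing every $a_i<n$ and hence $\gcd(a_i)<n$. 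Your worry about wild ramification is legitimate but is disposed of as in Remark~\ref{lifting} together with taking $p>|\Gamma|$, so that $\Gamma$ is a prime-to-$p$ quotient and therefore lifts to a quotient of $\pi_1(U_{\mathbb{C}})$.
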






\section{proofs}

We start by recalling couple of very basic properties of the spherical subalgebras of rational Cherednik algebras.
Namely the PBW property and the Dunkl isomorphism.

The crucial PBW property of $H_c, B_c,$ implies that if we equip $H_c, B_c,$
with an algebra filtration by putting 
$$deg(\mathfrak{h})=1,\quad deg(\mathfrak{h^*})=0,\quad deg(W)=0,$$ then
$$\Gr(H_c)=\mathbb{C}[W]\ltimes \Sym(\mathfrak{h}\oplus \mathfrak{h^*}), \quad \Gr(B_c)=\Sym(\mathfrak{h}\oplus \mathfrak{h^*})^{W}.$$
Recall that since for any nonzero $f\in \Sym(\mathfrak{h}^*), \text{ad}(f)=[f, -]$ acts locally nilpotently on $H_c,$
we may consider the localization $H_c[f^{-1}]$ (and $B_c[f^{-1}]$ for $f\in  \mathbb{C}[\mathfrak{h}]^W$).
Then we have the induced filtration on $B_c[f^{-1}]$ and 
$$\Gr(B_c[f^{-1}])=\Sym(\mathfrak{h}\oplus \mathfrak{h^*})^{W}_f.$$

Set $\mathfrak{h}^{reg}=\lbrace x\in \mathfrak{h}, (x, \alpha)\neq 0, \alpha\in S\rbrace.$
Let $\delta \in \mathbb{C}[h]^W$ be the defining function of  $\mathfrak{h}\setminus\mathfrak{h}^{reg}.$ 
 Recall that via the Dunkl embedding we have an isomorphism 
 $$B_c[\delta^{-1}]\cong D(\mathfrak{h}^{\text{reg}}).$$

\begin{proof}[Proof of Theorem\ref{main}]

We denote $B_c$ by $B$ throughout the proof. Since $B$ is a simple Noetherian ring  and  $Z(B)=\mathbb{C},$ it follows
 from the standard facts about fixed rings \cite{M} that $B$ is Morita equivalent to the skew ring $\mathbb{C}[\Gamma]\ltimes R$ (see [\cite{T}, Lemma 4]).
Now, there exists a large enough finitely generated ring $S\subset\mathbb{C}$,
and models of $B, R$ over $S$, to be denoted by $B_S, R_S,$
 so that $B_S$ is Morita equivalent
to $S[\Gamma]\ltimes R_S.$ In particular, $R_S$ is a projective left (and right) $B_S$-module. So for large enough $p\gg 0$ and a base change $S\to \bf{k}$ to an algebraically
closed field of characteristic $p$, we have that $B_{\bf{k}}$ is Morita equivalent to
$\bold{k}[\Gamma]\ltimes R_{\bf{k}}.$
  
   It is well-known that $B_{\bold{k}}$ is finite over its center, more specifically [\cite{BFG}, Theorem 9.1.1]
   $$\Gr Z(B_\bold{k})=\Gr(B_{\bold{k}})^p.$$ 
 Let $f\in Z(B_{\bold{k}})$ be a nonzero element that vanishes on the singular locus of
$\spec(Z(B_{\bold{k}})).$ As the smooth and the Azumaya loci of $\spec(Z(B_{\bold{k}}))$ coincide [\cite{BC}, Theorem ], we get that
$(B_{\bold{k}})_f$ is an Azumaya algebra over $Z(B_{\bold{k}})_f$ and $(B_{\bold{k}})_f$
is Morita equivalent to $\bold{k}[\Gamma]\ltimes (R_{\bf{k}})_f.$
Then just as in [\cite{T}, Proposition 1], we can conclude that $\spec Z(R_{\bold{k}})_f\to \spec Z(B_k)_f$ is
a $\Gamma$-Galois etale covering and
$$(R_{\bold{k}})_f=B_{\bold{k}}\otimes_{Z(B_{\bold{k}})} Z(R)_f.$$
Therefore, if $U$ denotes the smooth locus of $\spec(Z(B_{\bold{k}})),$ and $Y$ denotes
the preimage of $U$ under the projection $\spec(Z(R_{\bold{k}}))\to \spec Z(B_{\bold{k}}),$
then $Y\to U$ is $\Gamma$-Galois covering.
In particular, for any $g\in \bold{k}[\mathfrak{h}]$, $\text{ad}(g)$ acts locally nilpotently on $(R_{\bold{k}})_f.$ Which implies that $\text{ad}(g)$ acts locally
nilpotently on $R_{\bold{k}}$ as $R_{\bold{k}}$ is $Z(B_{\bold{k}})$-torsion free (since $R_{\bold{k}}$ is projective over $B_{\bold{k}}$).
It follows that if $R_{\bold{k}}$ is a domain, then $\Gamma$ is a quotient of the etale fundamental group
of the smooth locus of $\spec(Z(B_{\bold{k}}))=U.$ Thus, all it remains to show is that
$R_{\bold{k}}$ is a domain.

Next we argue that $ad(\delta)$ acts locally nilpotently on $R.$ Indeed, put $B'=B\otimes B^{op}$ and $f=\delta\otimes1-1\otimes\delta.$
So, $B'$ is a spherical subalgebra of a Cherednik algebra associated to $(W\times W, \mathfrak{h}\oplus \mathfrak{h}).$
We can view $R$ as a left $B'$-module.
Recall that we have the filtration
on $B'_S[f^{-1}]$ so that $\Gr (B'[f^{-1}])$ is a finitely generated commutative $S$-algebra. Equipping
$R_S[f^{-1}]$ with a compatible filtration gives that $\Gr(R_S[f^{-1}])$ is a finitely generated $\Gr (B'[f^{-1}])$-module, so by the 
generic flatness theorem there is a localization $S'$ of $S$ so that $\Gr(R_S'[f^{-1}])$ and hence
$R_{S'}[f^{-1}]$ is a free $S'$-module. On the other hand, since $R_{\bold{k}}[f^{-1}]= 0$ for all base changes
$S'\to\bold{k}$ for $\text{char}(\bold{k})\gg 0$, we conclude that $R[f^{-1}]=0.$
Therefore the action of $ad(\delta)$ on $R$ is locally nilpotent.

Let $R'=R[\delta^{-1}].$  Since $R'^{\Gamma}=D(\mathfrak{h}^{reg}),$
it follows from [\cite{T}, Theorem1] that $R'\cong D(Y)$ for some smooth affine variety $Y.$
Hence $R'_{\bf{k}}$ is a domain for $\text{char}(\bold{k})\gg 0$, as desired.

\end{proof}

To use Theorem \ref{main}, we need to know the $p'$-part of the etale fundamental group
of the smooth locus of $\spec(Z(B_{\bar{c}})).$ For this purpose we utilize the following.

\begin{remark}\label{lifting}
Let $X$ be a complete smooth variety over an algebraically closed field $\bf{k}$ of characteristic
$p,$ and $U\subset X$ be an open subset such that $X\setminus U$ is a divisor with normal crossings
in $X.$ Let $\tilde{X}$ be a complete smooth lift of $X$ over $W(\bold{k})$ ($W(\bold{k})$ is the ring of Witt vectors over
$\bold{k}$), $\tilde{U}\subset \tilde{X}$
be an open subset lifting $U$, such that $\tilde{X}\setminus\tilde{U}$ is a divisor with normal crossings over $W(\bold{k})$.
Then any $p'$-degree Galois covering of $U$ admits a lift to a Galois covering of $\tilde{U}$ [\cite{LO}, Corollary A.12], which yields
that any $p'$-quotient of the etale fundamental group of $U$ must be a quotient of the fundamental group of
$U_{\mathbb{C}}.$
 
\end{remark}

We need the following corollary of the Chebotarev density theorem.
It contains slightly more than [\cite{VWW}, Theorem 1.1]. We present a short proof for a reader's convenience.

\begin{lemma}\label{cheb}

Let $S$ be a finitely generated domain containing $\mathbb{Z}$ and $c\in S.$
Then there are infinitely many primes $p$ and ring homomorphisms $\phi_p:S\to F_p.$
If $c\notin \mathbb{Q}$ then there exist infinitely many primes $p$ and homomorphisms
$\phi_p:S\to F_q$, so that $\phi_p(c)\notin F_p$ and $q$ is a power of $p.$
\end{lemma}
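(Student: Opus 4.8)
The plan is to reduce both assertions to the classical Chebotarev density theorem after a Noether‑normalization‑and‑spreading‑out step. First I would choose, by Noether normalization over $\mathbb{Q}$ and the usual spreading out over $\mathbb{Z}$, a nonzero integer $N$, elements $t_1,\dots,t_d\in S$ algebraically independent over $\mathbb{Q}$, and $\theta\in S$ with $S[1/N]=A[\theta]$, $A=\mathbb{Z}[1/N][t_1,\dots,t_d]$, such that $\theta$ has monic minimal polynomial $F\in A[T]$ and $\operatorname{disc}_T F$ is a nonzero element of $A$; if in the second part $c$ is algebraic over $\mathbb{Q}$ I would also enlarge $N$ so that $c\in A[\theta]$, whereas if $c$ is transcendental I would take $t_1=c$ from the start.

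For the first assertion, pick $\bar a=(a_1,\dots,a_d)\in\mathbb{Z}^d$ off the zero locus of $\operatorname{disc}_T F$, absorb the nonzero integer $\operatorname{disc}_T F(\bar a)$ into $N$, and specialize $t_i\mapsto a_i$. This gives a ring map $S\to S[1/N]\to R:=\mathbb{Z}[1/N][T]/\bigl(F(\bar a;T)\bigr)$ with $R$ finite étale over $\mathbb{Z}[1/N]$, so $R\otimes\mathbb{Q}$ is a product of number fields $E_j$. Picking one factor $E_1$ and applying Chebotarev to its Galois closure, a positive density of primes $p$ have $\operatorname{Frob}_p$ fixing a root of $F(\bar a;T)$, i.e.\ $F(\bar a;T)\bmod p$ has a root in $F_p$ (every $p$ splitting completely in $E_1$ works); for such $p\nmid N$ this root gives $R\to F_p$, and composition yields $\phi_p\colon S\to F_p$.

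Now suppose $c\notin\mathbb{Q}$. If $c$ is transcendental I would specialize only $t_2,\dots,t_d$, keeping $t_1=c$ a variable; this produces a finitely generated domain $\mathcal D\supseteq\mathbb{Z}[c]$ that is finite over a localization $B_0$ of $\mathbb{Z}[c]$, together with $S\to\mathcal D$ sending $c\mapsto c$. For every large prime $p$ one sends $c$ to some $\beta\in F_{p^2}\setminus F_p$ avoiding the finitely many zeros of the localizing element, extends along the finite map $\mathcal D\to\overline{F_p}$, and composes to get $\phi_p\colon S\to F_q$ with $q=p^k$ and $\phi_p(c)=\beta\notin F_p$. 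If instead $c$ is algebraic over $\mathbb{Q}$ of degree $e\ge2$, I would reuse the algebra $R$ above: since $\mathbb{Q}(c)=\mathbb{Q}[c]$ is a field inside $S[1/N]\otimes\mathbb{Q}$ and $R\otimes\mathbb{Q}=\prod_j E_j\ne0$, the composite $\mathbb{Q}(c)\to\prod_j E_j$ is injective, so $\mathbb{Q}(c)$ embeds into some factor $E=E_{j_0}$; projecting onto that factor gives a map $S\to\mathcal O\subseteq E$ with $\mathcal O$ an order, under which the image $c_E$ of $c$ still has degree $e$ over $\mathbb{Q}$. Then Chebotarev for the Galois closure of $\mathbb{Q}(c_E)$, together with the classical fact (going back to Jordan) that a transitive permutation group on $e\ge2$ letters contains a fixed‑point‑free element, shows that a positive density of primes $p$ make the minimal polynomial $g$ of $c_E$ have no root in $F_p$; reducing modulo any prime $\mathfrak P$ of $\mathcal O$ above such a $p$ gives $\phi_p\colon S\to F_q$, $q$ a power of $p$, with $\phi_p(c)$ a root of $g\bmod p$ and hence $\phi_p(c)\notin F_p$.

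The step I expect to need the most care is the spreading‑out bookkeeping in the first paragraph: arranging $N$ so that $S[1/N]$ is genuinely $A[\theta]$ with $F$, its discriminant, and (when needed) $c$ all defined over $A$, and checking that a generic integer specialization keeps $R$ finite étale and keeps $c_E$ of degree $e$. This is routine. The essential input is Chebotarev's theorem, supplemented in the algebraic case by the group‑theoretic remark about fixed‑point‑free elements; the transcendental case needs only that the affine line has a degree‑$2$ point over every large prime field. Beyond assembling these standard ingredients I do not foresee a real obstacle.
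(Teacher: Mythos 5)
Your proof is correct and follows essentially the same route as the paper's: Noether normalization plus spreading out to reduce the first claim to orders in number fields (and, in the transcendental case, to a ring finite over a localization of $\mathbb{Z}[c]$, where one picks a degree~$\geq 2$ point of the affine line over $F_p$), followed by the Chebotarev density theorem. The only place you do slightly more work than the paper is the algebraic case of the second claim: the paper reduces $\mathbb{Z}[c]$ itself, where $c$ generates the residue field of every prime, so primes of residue degree $\geq 2$ suffice and no fixed-point-free element is needed, whereas your detour through a possibly larger order $\mathcal{O}$ forces the stronger ``no root of $g$ in $F_p$'' condition and hence the appeal to Jordan's theorem --- both variants are valid.
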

\begin{proof}
By the Noether normalization theorem, there exists $l\in \mathbb{N}$ and
algebraically independent $x_1,\cdots, x_n\in S_l$ so that
$S_l$ is integral over $S_l[x_1,\cdots, x_n].$ Let $I$ be a prime ideal in $S_l$ lying over $(x_1,\cdots, x_n)$
(such ideal exists since $\spec(S)\to \spec(S_l[x_1,\cdots, x_n])$ is surjective by the going-up theorem).
 So, $S_l/I=R$ is an integral domain finite over $\mathbb{Z}_l.$
Let $S'$ be the integral closure of $\mathbb{Z}$ in $R.$ Then $R=S'_l.$ 
Thus suffices to show that there exists a homomorphism $\phi_p:S'\to F_p$ for infinitely many $p$. This
 is a consequence of the Chebotarev density theorem.

  We have that the image of the map $\spec(S)\to \spec \mathbb{Z}[c_1]$ contains a nonempty open subset.
If $c_1$ is algebraic, then all but finitely many prime ideals in $\mathbb{Z}[c_1]$ lift to $S.$
By the Chebotarev denisity theorem there are infinitely many primes $I\subset \mathbb{Z}[c_1]$
such that the image of $c_1$ in the quotient  $\mathbb{Z}[c_1]/I\cong F_q$ does not belong to $F_p.$
Let $I'\in\spec(S)$ be a lift of $I.$
Now any homomorphism $S/I'\to \bar{F_p}$ lifting $\mathbb{Z}[c_1]/I\to F_q$ will do.
Finally, let $c_1$ be transcendental. Let $f\in \mathbb{Z}[c_1]$ be such that $\spec (\mathbb{Z}[c_1]_f)$
lifts to $\spec(S).$ Thus it suffices to show that there are infinitely many primes $p$ for which there exists
$t\in\mathbb{Z}[c_1]$ such that $f\notin (p, t)$ and $\mathbb{Z}[c_1]/(p, t)=F_q$ for $q>p.$ 
For this purpose we can take any $p$ that does not divide $f$, then take a nonlinear irreducible $\bar{t}\in F_p[c_1]$
that does not divide $f\mod p.$ Then let $t$ be any lift of $\bar{t}.$
\end{proof}

\begin{remark}

Given a Cherednik algebra $H_c$ associated with an arbitrary pair $(W, \mathfrak{h})$, we expect that
there is a base change to a characteristic $p$ field $\bold{k}$ for infinitely many
values of $p,$ such that 
$$\spec(Z(H_{\bar{c}}))=\spec(Z(B_{\bar{c}}))=(\mathfrak{h}_{\bold{k}}\oplus\mathfrak{h^*}_{\bold{k}})/W.$$
This in view of Theorem \ref{main} would imply that if $B_c=R^{\Gamma},$ where $B_c$ is simple
and $R$ is a domain, then $\Gamma$ must be a quotient of $W.$

\end{remark}

For the proof of Theorem \ref{Sn} we need to recall the definition of the $n$-th Calogero-Moser space. Consider the following
subscheme of pairs of $n$-by-$n$ matrices over $\mathbb{C}$

$$X=\lbrace (A, B)|\quad \text{rank}([A, B]+\text{Id}_n)=1\rbrace.$$
It is known that $PGL_n(\mathbb{C})$ acts freely on $X$ by conjugation, and the $n$-th Calogero Moser space,
denoted by $\text{CM}_n,$ is defined as the quotient 
$$X//PGL_n(\mathbb{C})=\text{CM}_n.$$
It is well-known that $\text{CM}_n$ is a smooth, affine variety over $\mathbb{C}$ \cite{W}.
In the following proof we also need that the Calogero-Moser spaces are simply connected.
This follows from the fact that the $n$-th Calogero-Moser space is homeomorphic to
the Hilbert scheme of $n$-points on the plane which is known to be simply connected based on its cell decomposition.

\begin{proof}[Proof of Theorem \ref{Sn}]

If $c$ is rational then 
after a base change to a field $\bold{k}$ of characteristic $p,$ we have that \cite{BFG}
$$\spec(Z(B_{\bar{c}}))=(\mathfrak{h}\oplus \mathfrak{h}^*)/S_n.$$ 
  Hence using Remark \ref{lifting}, the  $p'$-etale fundamentale group
of the smooth locus of $\spec(Z(B_{\bar{c}}))$ is $S_n.$ Let  $c$ be irrational. By Lemma
\ref{cheb} for any finitely generate subring $S\subset \mathbb{C},$ there are infinitely many primes $p$ and algebraically close fields $\bf{k}$ of characteristic $p$
with a base change $S\to\bold{k}$, such that $\bar{c}\notin F_p.$ Then as explained in \cite{BFG}, we have 
$$\spec Z(B_{\bar{c}})\cong (\text{CM}_{n})_{\bold{k}}.$$
Since $(\text{CM}_{n})_{\bold{k}}$  admits a smooth
simply connected lift to characteristic 0 (namely $\text{CM}_n$), the desired assertion follows.

\end{proof}

\begin{proof}[Proof of Theorem \ref{typeA}]
Let $S\subset \mathbb{C}$ be as in the conclusion of Theorem \ref{main}.
Denote by $\bar{t}_1,\cdots, \bar{t}_n$ images of $t_1,\cdots, t_n$ after a base change $S\to \bf{k}$ to an algebraically closed field of characteristic $p.$
The center of $A(\bar{v})=A(v)\otimes_S\bold{k}$ is known to be generated by $$x_p=x^p, y_p=y^p,h_p= h^p-h$$
subject to the following relation \cite{BC}
$$x_py_p=\prod_{i=1}^n(h_p-(\bar{t_i}^p-\bar{t_i})).$$
After reordering if necessary, 
let $\bar{t_1},\cdots,\bar{t_k}$ be representatives of all distinct cosets of $\bar{t_i}+F_p$ with multiplicities
$a_1,\cdots, a_k.$ So 
$$x_py_p=\prod_{i=1}^k(h^p-h-(\bar{t_i}^p-\bar{t_i}))^{a_i}.$$
Hence the singular locus of $\spec Z(A(\bar{v}))$ is
$$\lbrace x_p=0=y_p, h_p=\bar{t_i}^p-\bar{t_i}, a_i>1\rbrace.$$
Let $\alpha_i, 1\leq i \leq k$ be lifts of $\bar{t_i}^p-\bar{t_i}$ in $W(\bf{k}).$
Let $U$ be the smooth locus of $$\spec W(\bold{k})[a, b, c]/(ab-\prod_{i\leq k}(c-\alpha_i)^{a_i}).$$
Then $U$ is a lift of the smooth locus of $\spec Z(A(\bar{v}))$ over $W(\bf{k})$ and
the fundamental group of $U_{\mathbb{C}}$ is 
 $\mathbb{Z}/gcd(a_1,\cdots, a_k)\mathbb{Z}.$
 Using Lemma \ref{cheb}, there are infinitely many $p$ for which there exists a base change
 such that $\bar{t_i}\in F_p$ for all $i.$ Hence, $\Gamma$ must be a quotient of  $\mathbb{Z}/n\mathbb{Z}.$
  Let $t_1-t_2\notin \mathbb{Q}.$ Then again using Lemma \ref{cheb}, there exists a base change so that
  $\bar{t_1}^p-\bar{t_1}\neq \bar{t_2}^p-\bar{t_2}.$ Therefore, in the corresponding partition $(a_1,\cdots, a_k)$
  all numbers are less than $n.$ Hence, $gcd(a_1,\cdots, a_k)=d<n.$ Thus $\Gamma$ must be a quotient of 
  $\mathbb{Z}/d\mathbb{Z}.$
\end{proof}




\begin{remark}
It seems natural to expect that Theorem \ref{main} should hold for general filtered quantizations.
Namely, given a simple $\mathbb{C}$-domain $A$ that can be equipped with an ascending filtration
such that the corresponding associated graded algebra is a finitely generated commutative $\mathbb{C}$-domain.
In this setting, if $A=R^G,$ then it seems reasonable to expect that $G$ must appear as a quotient of the
etale fundamental group of the Azumaya locus of $\spec(Z(A_{\bold{k}}))$ for char$(\bold{k})\gg 0.$
The proof of Theorem \ref{main} can easily be adapted to prove this provided that
$R_{\bf{k}}$ is a domain. It also follows from the proof that $R$ must be a Harish-Chandra bimodule over $A.$

\end{remark}

\end{document}